\newtheorem{lem}{Lemma}[section]
\newtheorem{thm}[lem]{Theorem}
\begin{document}

\title{On the existence of specified cycles in bipartite tournaments}
\author{ Bo Zhang,\quad  Weihua Yang\footnote{Corresponding author. E-mail: ywh222@163.com, yangweihua@tyut.edu.cn.}\\
\small Department of Mathematics, Taiyuan University of
Technology, Taiyuan 030024,
China\\
}
\date{}
\maketitle

{\small{\bf Abstract.}\quad For two integers $n\geq 3$ and $2\leq p\leq n$, we denote $D(n,p)$ the digraph  obtained from a directed $n$-cycle by changing the orientations of $p-1$ consecutive arcs. In this paper, we show that a family of $k$-regular $(k\geq 3)$ bipartite tournament $BT_{4k}$ contains $D(4k,p)$ for all $2\leq p\leq 4k$ unless $BT_{4k}$ is isomorphic to a digraph $D$ such that $(1,2,3,...,4k,1)$ is a Hamilton cycle and $(4m+i-1,i)\in A(D)$ and $(i,4m+i+1)\in A(D)$, where $1\leq m\leq k-1$.

\vskip 0.5cm Keywords: Regular bipartite tournaments,  Antidirected hamiltonian cycle, Specified cycle

\section{Introduction}

For terminology not explicitly introduced here, we refer to \cite{bang,four}. We denote the in-degree and out-degree of vertex $x$ by $d^{-}(x)$ and $d^{+}(x)$ respectively in digraphs. Let $D$ be a digraph. If $xy$ is an arc of $D$, we say that $x$ dominates $y$. A digraph $D$ is $k$-regular if $d^{+}(v)=d^{-}(v)=k$ for all $v\in V$. Clearly, a $k$-regular bipartite tournament has $4k$ vertices.

Let $n$, $p$ be two integers, $n\geq 3$, $2\leq p\leq n$. We denote by $D(n,p)$ the digraph obtained from a directed $n$-cycle by changing $p-1$ consecutive arcs. We use $[a_{1}, a_{2},...,a_{p}; a_{1}, b_{1}, b_{2},...,b_{n-p}, a_{p}]$ to denote the $D(n,p)$ consisting of a directed path $(a_{1}, a_{2},...,a_{p})$ of length $p-1$ and a directed path $(a_{1}, b_{1}, b_{2},...,b_{n-p}, a_{p})$ of length $n-p+1$, where $a_{i}\neq b_{j}$, for all $i$, $j$. It is easy to see that $D(n,p)$ and $D(n,n-p+2)$ are isomorphic.

We denote by $P(n,p)=[a_{1}, a_{2},...,a_{p}; b_{1}, b_{2},...,b_{n-p}, a_{p}]$ the directed graph obtained from $D(n,p)$ by removing the arc $(a_1,b_1)$, and we call vertices $a_1,b_1$ the initial vertex and the end vertex of $P(n,p)$, respectively. Analogously, we define the digraph $Q(n,p)=[a_{1}, a_{2},...,a_{p};$ $a_{1}, b_{1}, b_{2},...,b_{n-p}]$ (i.e. delete the arc $(b_{n-1},a_p)$) from $D(n,p)$), with initial vertex $a_{p}$ and end vertex $b_{n-p}$. Alspach and Rosenfeld \cite{one} proved that every tournament of order $n\geq 4$ contains $P(n,p)$ and $Q(n,p)$ for $p=2,3,...,n-1$. The result was applied to prove the existence of $D(n,p)$ in tournaments by Wojda.

Note that a tournament has a directed Hamiltonian path or a directed Hamiltonian cycle. Instead of just looking for directed Hamiltonian paths or directed Hamiltonian cycles, one can seek arbitrary orientations of paths and cycles. An oriented path is a sequence of vertices $a_{1}, a_{2},..., a_{n}$ in a tournament $T$, the type of this path is characterized by the sequence $\sigma_{n-1}=\varepsilon_{1}...\varepsilon_{n-1}$ where $\varepsilon_{i}=+1$ if $a_{i}\rightarrow a_{i+1}$ and $\varepsilon_{i}=-1$ if $a_{i+1}\rightarrow a_{i}$. A block in a type $\sigma_{n-1}=\varepsilon_{1}...\varepsilon_{n-1}$ is a maximal subsequence of consecutive elements having the same sign (maximal with respect to inclusion). An oriented cycle is defined similarly.

A simple path or cycle in a digraph is said to be anti-directed if every two adjacent arcs have opposing orientations. An anti-directed Hamiltonian path of a digraph, which contains all the vertices, has the largest number of blocks. An digraph of odd order cannot have an anti-directed Hamiltonian cycle. The problems about anti-directed Hamiltonian Paths or cycles were discussed early. In 1971, Grunbaum \cite{two} initiated the study of anti-directed paths and cycles in tournaments, he proved that all tournaments of order $n\geq 8$ have an anti-directed Hamiltonian path. In 1972, Rosenfeld \cite{liu} proved that all tournaments of order $n\geq 12$ have an anti-directed Hamiltonian path starting from any specified vertex. Later, Hell and Rosenfeld \cite{P.H} proved that in any tournament there is an anti-directed Hamiltonian path from a specified first vertex to a specified last vertex, and starting with an arc of specified direction, as long as both the first vertex and the last vertex satisfy an obvious minimally necessary condition, they apply this result to give a polynomial time algorithm to find an anti-directed Hamiltonian path between two specified vertices of an input tournament. Grunbaum conjectured that all even tournaments of order $2n\geq 10$ have an anti-directed Hamiltonian cycle. In 1973, Thomasen \cite{C.Th} proved it for $2n\geq 50$ and the next year Rosenfeld \cite{eleven} improved the bound to $2n\geq 28$. In 1983, Petrovic \cite{Petrovic} proved that all even tournaments of order $2n\geq 16$ have an anti-directed Hamiltonian cycle.

Since the total degree of every vertex in a tournament on $2n$ vertices is $2n-1$, Grant wondered if all digraphs on $2n$ vertices with total degree $2n-1$ have an anti-directed Hamiltonian cycle. In 1980, Grant \cite{DDG} proved that if $D$ is a directed graph on $2n$ vertices and $\delta^{0}(D)\geq \frac{4}{3}n+2\sqrt{n\log n}$, then $D$ has an anti-directed Hamiltonian cycle. In 1995, Haggkvist and Thomason \cite{three} proved that if $D$ is a digraph on $n$ vertices and $\delta(D)\geq \frac{n}{2}+n^{\frac{5}{6}}$, then $D$ contains every orientation of a cycle on $n$ vertices. In 2008, Plantholt and Tipnis \cite{six} improved the result by showing that if $D$ is a digraph on $2n$ vertices and $\delta(D)\geq \frac{4}{3}n$, then $D$ has an anti-directed Hamiltonian cycle. In 2013, Busch et al. \cite{seven} improved the results by showing that if $D$ is a digraph on $2n$ vertices and $\delta(D)\geq n+\frac{14}{3}\sqrt{n}$, then $D$ has an anti-directed Hamiltonian cycle. Later, DeBiasio and Molla \cite{LD} proved that if $D$ is a digraph on $n$ vertices with minimum out-degree and in-degree at least $\frac{n}{2}+1$, then $D$ has an anti-directed Hamiltonian cycle.

In 1974, Rosenfeld conjectured that there is an integer $N>8$ such that every tournament of order $n\geq N$ contains every nondirected cycle of order $n$. Rosenfeld's cycle conjecture was proved in full generally by Thomason \cite{ten}, who proved that a tournament $T$ of order $n\geq 2^{128}$ contains a non-strongly oriented cycle of order $n$. While Thomason made no attempt to sharpen this bound, he indicated that it should be true for tournaments of order at least 9. The cycles with just two blocks is another special case. In 1971, Grunbaum \cite{two} and Thomassen \cite{eight,nine} studied the existence of $D(n,2)$ in a tournament. Grunbaum proved that every tournament $T$ of order $n\geq 3$ contains $D(n,2)$ unless $T$ is isomorphic to $T_{3}^{*}$ or $T_{5}^{*}$. Thomassen proved that the minimum number of copies of $D(n,2)$ in a strong tournament is at least $n-5$, and at least $c^{n}$ for some constant $c>1$. In 1983, Benhocine and Wojda \cite{Benhoc} proved that every digraph with $n\geq 3$ and at least $(n-1)(n-2)+2$ arcs contains $D(n,p)$ for $p=2,3,...,n$ with two exceptions. Wojda \cite{five} proved that every tournament of order $n\geq 7$ contains $D(n,p)$ for $p=2,3,...,n$ and the tournaments of order $n$, $3\leq n\leq 6$, which do not contain $D(n,p)$ for some $p$.

Motivated by considering cycles with just two blocks in tournaments, we explore to generalize the problem about $D(n,p)$ to bipartite tournaments. In this paper, the decomposable $k$-regular bipartite tournament will be defined by induction, as follows:

1. A decomposable $1$-regular bipartite tournament $BT_{4}$ is a cycle of length 4.

2. A decomposable $2$-regular bipartite tournament $BT_{8}$ is obtained from two decomposable $1$-regular bipartite tournaments by adding  arcs.

3. A decomposable $k$-regular bipartite tournament $BT_{4k}$ is obtained from a decomposable $1$-regular bipartite tournament $BT_{4}$ and a decomposable $(k-1)$-regular bipartite tournament $BT_{4(k-1)}$ by adding arcs.

We prove that a decomposable $2$-regular bipartite tournament either contains only $D(8,5)$ or contains $D(8,p)$ for $2\leq p\leq 8$ and $p\neq 5$, and that a decomposable $3$-regular bipartite tournament $BT_{12}$ contains $D(12,p)$ for $2\leq p\leq 12$ unless $BT_{12}$ is isomorphic to a digraph $D$ which has a Hamiltonian cycle $(1,2,3,...,12,1)$ and  $(4m+i-1,i)\in A(D)$ and $(i,4m+i+1)\in A(D)$  for all $i$. Furthermore, we prove that a decomposable $k$-regular $(k\geq 3)$ bipartite tournament $BT_{4k}$ contains $D(4k,p)$ for $2\leq p\leq 4k$ unless $BT_{4k}$ is isomorphic to a digraph $D$ having a Hamiltonian cycle $(1,2,3,...,4k,1)$ and  $(4m+i-1,i)\in A(D)$ and $(i,4m+i+1)\in A(D)$ for all $i$.

\section{preliminary results}

In this paper, we let $V=V(D)=\{1,2,3,...,n\}$ be the set of vertices of $D$ and denote a cycle by $(1,2,3,...,n,1)$. Throughout the paper, the number should take modulo $n$ if it necessary.

\begin{lem}\label{1} Let $BT_{8}$ be a decomposable $2$-regular bipartite tournament, then $BT_{8}$ either contains only $D(8,5)$ or contains $D(8,p)$ for $2\leq p\leq 8$ and $p\neq 5$.
\end{lem}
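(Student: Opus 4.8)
The plan is to analyze the decomposable $2$-regular bipartite tournament $BT_8$ by first understanding its structure. By the inductive definition, $BT_8$ is obtained from two copies of $BT_4$ (each a $4$-cycle) by adding arcs so that the result is $2$-regular bipartite. I would first label the two $4$-cycles as $C_1=(x_1,x_2,x_3,x_4,x_1)$ and $C_2=(y_1,y_2,y_3,y_4,y_1)$, fix the bipartition classes, and enumerate the ways of adding a consistent set of arcs between $C_1$ and $C_2$ that keeps every vertex at out-degree and in-degree exactly $2$. Since each vertex of $C_1$ already has one in-arc and one out-arc inside $C_1$, it needs exactly one more out-arc and one more in-arc going to $C_2$; the bipartite constraint forces these cross-arcs to respect the two colour classes. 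This should leave only a small number of non-isomorphic digraphs $BT_8$ to check (up to the symmetries of the two $4$-cycles and the swap $C_1\leftrightarrow C_2$).

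Next, for each such $BT_8$ I would exhibit explicitly the required copies of $D(8,p)$. Recall $D(8,p)$ consists of a directed path $(a_1,\dots,a_p)$ and a directed path $(a_1,b_1,\dots,b_{8-p},a_p)$; by the isomorphism $D(n,p)\cong D(n,n-p+2)$ noted in the excerpt, it suffices to produce $D(8,p)$ for $p\in\{2,3,4,5\}$ (the values $p=6,7,8$ follow from $p=4,3,2$ respectively), and the case $p=5$ is the self-paired ``antidirected-type'' case that is allowed to be the sole survivor. So the real work is: for every $BT_8$ in the list, either find all of $D(8,2),D(8,3),D(8,4),D(8,5)$, or show that only $D(8,5)$ survives. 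For $p=5$ this is an antidirected Hamiltonian cycle of $BT_8$, whose existence I would get directly from the structure (a $2$-regular bipartite tournament on $8$ vertices is small enough to locate one by hand in each case). For the other values I would build the two internally disjoint directed paths by combining arcs of $C_1$, arcs of $C_2$, and one or two cross-arcs.

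The key combinatorial step is handling $p=4$: here one needs a directed path of length $3$ and an internally disjoint directed path of length $5$ spanning the remaining vertices and sharing only the two endpoints $a_1,a_4$. This is the tightest of the cases because $BT_8$ has short directed paths inside each $C_i$ (length at most $3$), so the length-$5$ path is forced to use cross-arcs in a constrained way. I would organize the verification by first locating, in each $BT_8$, a directed path of length $3$ with as much ``room'' left over as possible, and then checking whether the complementary six vertices admit the required length-$5$ path; the exceptional $BT_8$'s (those containing only $D(8,5)$) will be exactly the ones where this fails for every choice of the length-$3$ path, and I expect these to coincide with the ``rigid'' digraph described in the general theorem's exceptional family with $k=2$.

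I expect the main obstacle to be the bookkeeping: making sure the enumeration of cross-arc patterns is exhaustive up to isomorphism (it is easy to miss a case or double-count), and then, for the genuinely exceptional $BT_8$, proving a negative statement — that $D(8,p)$ does \emph{not} embed for $p\neq 5$ — which requires a short but careful parity/structure argument rather than an explicit construction. A clean way to get the negative part is to observe that in the exceptional digraph every directed path has length at most $3$ in a very controlled sense (all long ``paths'' must alternate in a fixed way), which obstructs the asymmetric two-block cycles $D(8,p)$ with $p\ne5$ while still permitting the symmetric one $D(8,5)$.
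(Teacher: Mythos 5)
Your structural setup---two $4$-cycles with each vertex sending exactly one arc to and receiving exactly one arc from the other cycle, reduced by symmetry to a short list of digraphs---is essentially the paper's (there are exactly two such $BT_8$ up to isomorphism, distinguished by the orientation of one pair of cross arcs), and the reduction to $p\in\{2,3,4,5\}$ via $D(8,p)\cong D(8,10-p)$ is the same. But your plan misreads what has to be proved. The dichotomy is not ``either only $D(8,5)$ survives or all of $D(8,2),D(8,3),D(8,4),D(8,5)$ appear'': one of the two digraphs contains only $D(8,5)$, while the other contains $D(8,p)$ for every $p\neq 5$ and contains \emph{no} $D(8,5)$. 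So your expectation that a copy of $D(8,5)$ can be ``located by hand in each case'' fails for the second digraph, and your plan omits entirely the non-existence argument needed there. Relatedly, $D(8,5)$ is not an antidirected Hamiltonian cycle: it is a Hamiltonian cycle with exactly two blocks, each a directed path of four arcs; the alternating cycle is a different object, and this misidentification is what leads you to assume $D(8,5)$ always exists.

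Second, your proposed mechanism for the negative part in the exceptional digraph---that ``every directed path has length at most $3$ in a very controlled sense''---cannot work. The exceptional $BT_8$ (vertex $i$ dominating $i+1$ and $i+5$ modulo $8$, which is indeed the $k=2$ member of the theorem's exceptional family, as you guessed) has a directed Hamiltonian cycle $1\to 2\to\cdots\to 8\to 1$, hence directed paths of every length. The paper's non-existence proofs are instead forcing arguments: assuming, say, $D(8,2)=[v_1,v_8;\,v_1,v_2,\dots,v_8]$ exists, $2$-regularity forces all remaining arcs (e.g.\ $v_8\to v_5\to v_2\to v_7\to v_4\to v_1$ and $v_8\to v_3\to v_6\to v_1$), and the resulting labelled digraph is checked not to be isomorphic to the exceptional $BT_8$; this is repeated for each $p\neq 5$, and a symmetric forcing argument shows the other $BT_8$ has no $D(8,5)$. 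To complete your proof you would need to replace the false path-length obstruction by an argument of this kind and add the corresponding non-existence argument for $D(8,5)$ in the non-exceptional case; the constructive half of your plan (exhibiting the required $D(8,p)$ by combining cycle arcs with one or two cross arcs) is fine and matches the paper.
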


\begin{proof} Note that $BT_{8}$ is consisted of two decomposable $1$-regular bipartite tournaments, see figure 1.

Case 1. When $4\rightarrow5$ and $8\rightarrow1$, then $5\rightarrow2\rightarrow7\rightarrow4$ and $1\rightarrow6\rightarrow3\rightarrow8$ (see figure 2). We have $D(8,5)=[5,2,3,4,1; 5,6,7,8,1]$. Now, we shall show that it has no $D(8,p)$, $2\leq p\leq4$ and $6\leq p\leq8$, by contradiction.
Suppose $BT_{8}$ has $D(8,2)=D(8,8)=[v_{1},v_{8}; v_{1},v_{2},v_{3},v_{4},v_{5},v_{6},v_{7},v_{8}]$. Since $BT_{8}$ is 2-regular, we have $v_{8}\rightarrow v_{5}\rightarrow v_{2}\rightarrow v_{7}\rightarrow v_{4}\rightarrow v_{1}$ and $v_{8}\rightarrow v_{3}\rightarrow v_{6}\rightarrow v_{1}$. So we just get two Hamiltonian cycles $(v_{1},v_{8},v_{5},v_{2},v_{3},v_{6},v_{7},v_{4},v_{1})$ and $(v_{1},v_{2},v_{7},v_{8},v_{3},v_{4},v_{5},v_{6},v_{1})$, but there is $v_{1}\rightarrow v_{2}$ and $v_{1}\rightarrow v_{8}$ respectively, they are not isomorphic to the digraph of figure 2. Then $BT_{8}$ has no $D(8,2)$ and $D(8,8)$. By the similar way, we prove that $BT_{8}$ also has no $D(8,3)$, $D(8,4)$, $D(8,6)$, and $D(8,7)$.

Case 2. When $4\rightarrow5$ and $1\rightarrow8$, then $5\rightarrow2\rightarrow7\rightarrow4$ and $8\rightarrow3\rightarrow6\rightarrow1$ (see figure 3). We have $D(8,2)=D(8,8)=[1,8; 1,2,3,4,5,6,7,8]$, $D(8,3)=D(8,7)=[1,8,5; 1,2,3,6,7,4,5]$ and $D(8,4)=D(8,6)=[1,2,3,4; 1,8,5,6,7,4]$. By the similar way as in case 1, we prove that $BT_{8}$ has no $D(8,5)$.

When $5\rightarrow4$ and $1\rightarrow8$, the graph is isomorphic to the digraph of figure 2 and it is similar to case 1. Also, when $5\rightarrow4$ and $8\rightarrow1$, the graph is isomorphic to the digraph of figure 3 and it is similar to case 2.

Thus, the result is proved.
\end{proof}

\begin{figure}[ht]
\label{fig-exgraph2}
\center
\includegraphics[scale=1]{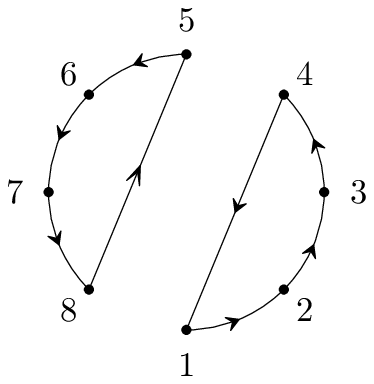}
\center
\small{figure 1}
\end{figure}

\begin{lem}\label{2} Let $BT_{12}$ be a decomposable $3$-regular bipartite tournament, then $BT_{12}$ contains $D(12,p)$ for $2\leq p\leq 12$ unless $BT_{12}$ is isomorphic to a digraph $D$ containing a Hamiltonian cycle $(1,2,3,...,12,1)$ and  $(4m+i-1,i)\in A(D)$ and $(i,4m+i+1)\in A(D)$ for all $i$.
\end{lem}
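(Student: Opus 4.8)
The plan is to leverage the inductive structure of a decomposable $BT_{12}$: it is the union of a decomposable $1$-regular bipartite tournament $C=BT_4$ (a directed $4$-cycle), a decomposable $2$-regular bipartite tournament $BT_8$, and the arcs between them. First I would determine these connecting arcs. Since $BT_{12}$ is $3$-regular and bipartite, with bipartition inherited from the two pieces, each of the two vertices of $C$ lying in a fixed colour class dominates exactly two, and is dominated by exactly two, of the four opposite-colour vertices of $BT_8$; these two ``out-neighbourhoods'' turn out to be complementary, and dually each vertex of $BT_8$ sends exactly one arc to and receives exactly one arc from $C$. Up to isomorphism this leaves only a short list of connecting patterns. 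I would also carry along the two possibilities for $BT_8$ established inside the proof of Lemma~\ref{1}: the \emph{bad} one (which contains only $D(8,5)$) and the \emph{good} one (which contains $D(8,p)$ for every $p\neq 5$), both of which are completely explicit digraphs on $8$ labelled vertices.

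Because $D(12,p)\cong D(12,14-p)$, it suffices to exhibit a copy of $D(12,p)$ for $p\in\{2,3,4,5,6,7\}$; equivalently, a spanning subgraph of $BT_{12}$ that is an orientation of the $12$-cycle with exactly two blocks, of sizes $p-1$ and $13-p$. For each connecting pattern in which $BT_8$ is the good one, I would construct these copies directly: take a guaranteed structure inside $BT_8$ (a $D(8,q)$, or one of the paths $P(8,q)$, $Q(8,q)$ obtained from it by deleting one arc) that begins and ends at two appropriate vertices, and splice the four vertices of $C$ into it through the connecting arcs. Inserting $C$ into one block, or splitting its vertices between the two blocks, shifts the two block lengths in controllable small steps, so that from the available values of $q$ one reaches every target pair $\{p-1,13-p\}$. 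A parallel but shorter analysis handles the patterns in which $BT_8$ is bad: there $BT_8$ by itself only supplies the balanced two-block structure, so most of the burden falls on $C$ and the connecting arcs, and for \emph{most} such patterns one can still route around the bottleneck and obtain every $D(12,p)$.

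The crux — and the step I expect to be the real obstacle — is the converse. I would identify the connecting pattern (necessarily with the bad $BT_8$) for which the above routing fails to produce $D(12,p)$ for some $p$, show that it is the unique ``rotationally symmetric'' way of attaching $C$, and then prove that the resulting $BT_{12}$ is isomorphic to the digraph $D$ in the statement. Concretely, I would exhibit the forced Hamilton cycle, relabel the vertices as $1,2,\dots,12$ around it, and read off that the remaining arcs are exactly the chords $(4m+i-1,i)$ and $(i,4m+i+1)$ for $m=1,2$ and all $i$. Finally I would check that this $D$ genuinely omits a copy of $D(12,p)$ for the relevant $p$ — via a counting/degree argument on the chord set, in the same spirit as the ``no $D(8,p)$'' arguments in the proof of Lemma~\ref{1} — so that the stated exception is not vacuous. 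I expect the structural set-up and the good-$BT_8$ constructions to be routine but lengthy, with essentially all of the difficulty concentrated in pinning down the exceptional digraph and verifying that it is a true exception.
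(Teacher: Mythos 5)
Your plan follows essentially the same route as the paper: decompose $BT_{12}$ into the $4$-cycle $BT_4$ plus a $BT_8$, invoke the dichotomy of Lemma~\ref{1} (the ``good'' versus ``bad'' $BT_8$), use the regularity-forced pattern of connecting arcs to splice $BT_4$ into known two-block structures of $BT_8$, and isolate by case analysis the single attachment (with the bad $BT_8$) giving the exceptional circulant-type digraph $D$. The only real difference is that you also propose to verify that $D$ genuinely lacks some $D(12,p)$ (e.g.\ by a residue argument on the chord lengths), a check the paper omits and which the statement, as phrased, does not require.
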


\begin{proof} $BT_{12}$ is consisted of a decomposable $1$-regular bipartite tournament $BT_{4}$ and a decomposable $2$-regular bipartite tournament $BT_{8}$. $BT_{8}$ has two cases as figure 2 and figure 3. Let $BT_{4}$ be a cycle: $1^{'}\rightarrow2^{'}\rightarrow3^{'}\rightarrow4^{'}\rightarrow1^{'}$.

\begin{figure}[ht]
\label{fig-exgraph2}
\center
\includegraphics[scale=1]{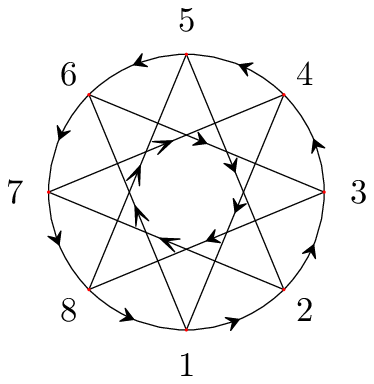}
\center
\small{figure 2}
\end{figure}

\begin{figure}[ht]
\label{fig-exgraph2}
\center
\includegraphics[scale=1]{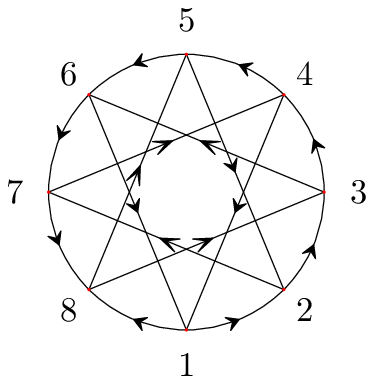}
\center
\small{figure 3}
\end{figure}

Case 1. $BT_{8}$ is isomorphic to the digraph in figure 3.

We first prove that $BT_{12}$ contains $D(12,p)$ for $2\leq p\leq 4$, $10\leq p\leq 12$, $p=6$ and $p=8$. By Lemma \ref{1}, $BT_{8}$ has $D(8,2)=D(8,8)=[1,8; 1,2,3,4,5,6,7,8]$. The vertex 2 must dominates one of the vertices in $BT_{4}$, without loss of generality, we assume that $2\rightarrow1^{'}$, then $3^{'}\rightarrow2$.

When $4^{'}\rightarrow3$, we have $1\rightarrow2\rightarrow1^{'}\rightarrow2^{'}\rightarrow3^{'}\rightarrow4^{'}\rightarrow3\rightarrow4\rightarrow5\rightarrow6\rightarrow7\rightarrow8$. When $3\rightarrow4^{'}$. If $3^{'}\rightarrow4$, we have $1\rightarrow2\rightarrow3\rightarrow4^{'}\rightarrow1^{'}\rightarrow2^{'}\rightarrow3^{'}\rightarrow4\rightarrow5\rightarrow6\rightarrow7\rightarrow8$; if $4\rightarrow3^{'}$, then $1^{'}\rightarrow4$. Now we consider two cases as follows.

$(a)$ When $1\rightarrow4^{'}$, then $2^{'}\rightarrow1$. As $3^{'}\rightarrow2$, we have $1\rightarrow4^{'}\rightarrow1^{'}\rightarrow2^{'}\rightarrow3^{'}\rightarrow2\rightarrow3\rightarrow4\rightarrow5\rightarrow6\rightarrow7\rightarrow8$.

$(b)$ When $4^{'}\rightarrow1$, then $1\rightarrow2^{'}$. As $3^{'}\rightarrow2$, $3\rightarrow4^{'}$ and $1^{'}\rightarrow4$, we have $1\rightarrow2^{'}\rightarrow3^{'}\rightarrow2\rightarrow3\rightarrow4^{'}\rightarrow1^{'}\rightarrow4\rightarrow5\rightarrow6\rightarrow7\rightarrow8$.

Then $BT_{12}$ contains $D(12,2)$ and $D(12,12)$. By Lemma \ref{1}, $BT_{8}$ has $D(8,3)=D(8,7)$ and $D(8,4)=D(8,6)$. In the same way, we have $BT_{12}$ contains $D(12,3)$, $D(12,4)$, $D(12,6)$, $D(12,8)$, $D(12,10)$ and $D(12,11)$.

Now, we shall prove that $BT_{12}$ contains $D(12,5)$, $D(12,7)$ and $D(12,9)$ in this case.

Case 1.1 When $1^{'}\rightarrow4$, we have $4\rightarrow3^{'}$. If $7\rightarrow 2^{'}$, then $4^{'}\rightarrow7$. Now we have $4\rightarrow3^{'}\rightarrow4^{'}\rightarrow1^{'}\rightarrow2^{'}$ and $4\rightarrow1\rightarrow8\rightarrow5\rightarrow2\rightarrow3\rightarrow6\rightarrow7\rightarrow2^{'}$; $7\rightarrow8\rightarrow5\rightarrow2\rightarrow3\rightarrow6\rightarrow1$ and $7\rightarrow2^{'}\rightarrow3^{'}\rightarrow4^{'}\rightarrow1^{'}\rightarrow4\rightarrow1$; that's to say we have $D(12,5)=D(12,9)=[4,3^{'},4^{'},1^{'},2^{'}; 4,1,8,5,2,3,6,7,2^{'}]$ and $D(12,7)=[7,8,5,2,3,6,1; 7,2^{'},3^{'},4^{'},1^{'},4,1]$ in this case. If $2^{'}\rightarrow7$, then $7\rightarrow4^{'}$. Now we consider two cases.

Case 1.1.1 When $1\rightarrow2^{'}$, then $4^{'}\rightarrow1$. We have $4\rightarrow3^{'}\rightarrow4^{'}\rightarrow1^{'}\rightarrow2^{'}$ and $4\rightarrow5\rightarrow2\rightarrow7\rightarrow8\rightarrow3\rightarrow6\rightarrow1\rightarrow2^{'}$; $4\rightarrow3^{'}\rightarrow4^{'}\rightarrow1^{'}\rightarrow2^{'}\rightarrow7\rightarrow8$ and $4\rightarrow5\rightarrow2\rightarrow3\rightarrow6\rightarrow1\rightarrow8$; that's to say we have $D(12,5)=D(12,9)=[4,3^{'},4^{'},1^{'},2^{'}; 4,5,2,7,8,3,6,1,2^{'}]$ and $D(12,7)=[4,3^{'},4^{'},1^{'},2^{'},7,8; 4,5,2,3,6,1,8]$ in this case.

Case 1.1.2 When $2^{'}\rightarrow1$, then $1\rightarrow4^{'}$. We have $2^{'}\rightarrow3^{'}\rightarrow4^{'}\rightarrow1^{'}\rightarrow4$ and $2^{'}\rightarrow1\rightarrow8\rightarrow5\rightarrow2\rightarrow3\rightarrow6\rightarrow7\rightarrow4$; $4\rightarrow3^{'}\rightarrow4^{'}\rightarrow1^{'}\rightarrow2^{'}\rightarrow1\rightarrow8$ and $4\rightarrow5\rightarrow2\rightarrow3\rightarrow6\rightarrow7\rightarrow8$; that's to say we have $D(12,5)=D(12,9)=[2^{'},3^{'},4^{'},1^{'},4; 2^{'},1,8,5,2,3,6,7,4]$ and $D(12,7)=[4,3^{'},4^{'},1^{'},2^{'},1,8; 4,5,2,3,6,7,8]$ in this case.

Case 1.2 When $4\rightarrow1^{'}$, then $3^{'}\rightarrow4$. If $7\rightarrow 2^{'}$, then $4^{'}\rightarrow7$. Now we have $7\rightarrow2^{'}\rightarrow3^{'}\rightarrow4^{'}\rightarrow1^{'}$ and $7\rightarrow8\rightarrow5\rightarrow6\rightarrow1\rightarrow2\rightarrow3\rightarrow4\rightarrow1^{'}$; $4\rightarrow5\rightarrow2\rightarrow3\rightarrow6\rightarrow1\rightarrow8$ and $4\rightarrow1^{'}\rightarrow2^{'}\rightarrow3^{'}\rightarrow4^{'}\rightarrow7\rightarrow8$; that's to say we have $D(12,5)=D(12,9)=[7,2^{'},3^{'},4^{'},1^{'}; 7,8,5,6,1,2,3,4,1^{'}]$ and $D(12,7)=[4,5,2,3,6,1,8; 4,1^{'},2^{'},3^{'},4^{'},7,8]$ in this case. If $2^{'}\rightarrow7$, then $7\rightarrow4^{'}$. We have $4\rightarrow1^{'}\rightarrow2^{'}\rightarrow3^{'}\rightarrow4^{'}$ and $4\rightarrow1\rightarrow8\rightarrow5\rightarrow2\rightarrow3\rightarrow6\rightarrow7\rightarrow4^{'}$, that's to say we have $D(12,5)=D(12,9)=[4,1^{'},2^{'},3^{'},4^{'}; 4,1,8,5,2,3,6,7,4^{'}]$ in this case. Now we consider two cases for $p=7$ in this case.

Case 1.2.1 When $4^{'}\rightarrow1$, then $1\rightarrow2^{'}$. We have $7\rightarrow4\rightarrow1^{'}\rightarrow2^{'}\rightarrow3^{'}\rightarrow4^{'}\rightarrow1$ and $7\rightarrow8\rightarrow5\rightarrow2\rightarrow3\rightarrow6\rightarrow1$, that's to say we have $D(12,7)=[7,4,1^{'},2^{'},3^{'},4^{'},1; 7,8,5,2,3,6,1]$ in this case.

Case 1.2.2 When $1\rightarrow4^{'}$, then $2^{'}\rightarrow1$. We have $3^{'}\rightarrow4^{'}\rightarrow1^{'}\rightarrow2^{'}\rightarrow1\rightarrow2\rightarrow3$ and $3^{'}\rightarrow4\rightarrow5\rightarrow6\rightarrow7\rightarrow8\rightarrow3$, that's to say we have $D(12,7)=[3^{'},4^{'},1^{'},2^{'},1,2,3; 3^{'},4,5,6,7,8,3]$ in this case.

Case 2. $BT_{8}$ is isomorphic to the digraph in figure 2.

Since there is a vertex in $BT_{4}$ dominates a vertex in $BT_{8}$, without loss of generality, we assume that $4^{'}\rightarrow1$. Then we have $1\rightarrow2^{'}$.

Case 2.1 When $8\rightarrow1^{'}$, then $3^{'}\rightarrow8$.

Case 2.1.1 When $3^{'}\rightarrow2$, then $2\rightarrow1^{'}$ and it also has $1^{'}\rightarrow4\rightarrow3^{'}$, $1^{'}\rightarrow6\rightarrow3^{'}$. We have
$4^{'}\rightarrow1$ and $4^{'}\rightarrow1^{'}\rightarrow2^{'}\rightarrow3^{'}\rightarrow2\rightarrow3\rightarrow4\rightarrow5\rightarrow6\rightarrow7\rightarrow8\rightarrow1$; $3^{'}\rightarrow4^{'}\rightarrow1^{'}\rightarrow2^{'}$ and $3^{'}\rightarrow2\rightarrow3\rightarrow4\rightarrow5\rightarrow6\rightarrow7\rightarrow8\rightarrow1\rightarrow2^{'}$; $4^{'}\rightarrow1^{'}\rightarrow2^{'}\rightarrow3^{'}\rightarrow8$ and $4^{'}\rightarrow1\rightarrow2\rightarrow3\rightarrow4\rightarrow5\rightarrow6\rightarrow7\rightarrow8$; that's to say, we have $D(12,2)=D(12,12)=[4^{'},1; 4^{'},1^{'},2^{'},3^{'},2,3,4,5,6,7,8,1]$, $D(12,4)=D(12,10)=[3^{'},4^{'},1^{'},2^{'}; 3^{'},2,3,4,5,6,7,8,1,2^{'}]$ and $D(12,5)=D(12,9)=[4^{'},1^{'},2^{'},3^{'},8; 4^{'},1,2,3,4,5,6,7,8]$ in this case. Now we consider two cases as follows in this case.

Case 2.1.1$(a)$ When $4^{'}\rightarrow3$, then $3\rightarrow2^{'}$, $5\rightarrow4^{'}$, $7\rightarrow4^{'}$, $2^{'}\rightarrow5$ and $2^{'}\rightarrow7$. We have $2^{'}\rightarrow7\rightarrow4$ and $2^{'}\rightarrow5\rightarrow6\rightarrow3^{'}\rightarrow4^{'}\rightarrow1\rightarrow2\rightarrow3\rightarrow8\rightarrow1^{'}\rightarrow4$; $2^{'}\rightarrow7\rightarrow8\rightarrow1\rightarrow2\rightarrow3$ and $2^{'}\rightarrow3^{'}\rightarrow4^{'}\rightarrow1^{'}\rightarrow4\rightarrow5\rightarrow6\rightarrow3$; $2^{'}\rightarrow5\rightarrow6\rightarrow7\rightarrow8\rightarrow1^{'}\rightarrow4$ and $2^{'}\rightarrow3^{'}\rightarrow4^{'}\rightarrow1\rightarrow2\rightarrow3\rightarrow4$; that's to say, we have $D(12,3)=D(12,11)=[2^{'},7,4; 2^{'},5,6,3^{'},4^{'},1,2,3,8,1^{'},4]$, $D(12,6)=D(12,8)=[2^{'},7,8,1,2,3; 2^{'},3^{'},4^{'},1^{'},4,5,6,3]$ and $D(12,7)=[2^{'},3^{'},4^{'},1,2,3,4; 2^{'},5,6,7,8,1^{'},4]$ in this case.

Case 2.1.1$(b)$ When $3\rightarrow4^{'}$, then $2^{'}\rightarrow3$. We have $2^{'}\rightarrow3^{'}\rightarrow4^{'}\rightarrow1\rightarrow2\rightarrow1^{'}$ and
$2^{'}\rightarrow3\rightarrow4\rightarrow5\rightarrow6\rightarrow37\rightarrow8\rightarrow1^{'}$; $1^{'}\rightarrow4\rightarrow5\rightarrow6\rightarrow7\rightarrow8\rightarrow1$ and $1^{'}\rightarrow2^{'}\rightarrow3^{'}\rightarrow2\rightarrow3\rightarrow4^{'}\rightarrow1$; that's to say, we have $D(12,6)=D(12,8)=[2^{'},3,4,5,6,7,8,1^{'}; 2^{'},3^{'},4^{'},1,2,1^{'}]$ and $D(12,7)=[1^{'},2^{'},3^{'},2,3,4^{'},1; 1^{'},4,5,6,7,8,1]$ in this case. Now if $5\rightarrow4^{'}$, then $4^{'}\rightarrow7\rightarrow2^{'}\rightarrow5$, we have $5\rightarrow4^{'}\rightarrow7$ and $5\rightarrow6\rightarrow3^{'}\rightarrow8\rightarrow1^{'}\rightarrow2^{'}\rightarrow3\rightarrow4\rightarrow1\rightarrow2\rightarrow7$; if $4^{'}\rightarrow5$, then $5\rightarrow2^{'}\rightarrow7\rightarrow4^{'}$, we have $2^{'}\rightarrow3\rightarrow4$ and $2^{'}\rightarrow3^{'}\rightarrow4^{'}\rightarrow5\rightarrow6\rightarrow7\rightarrow8\rightarrow1\rightarrow2\rightarrow1^{'}\rightarrow4$. So we have $D(12,3)=D(12,11)=[5,4^{'},7; 5,6,3^{'},8,1^{'},2^{'},3,4,1,2,7]$ or $D(12,3)=D(12,11)=[2^{'},3,4; 2^{'},3^{'},4^{'},5,6,7,8,1,2,1^{'},4]$ in this case.

Case 2.1.2 When $2\rightarrow3^{'}$, then $1^{'}\rightarrow2$. We have $4^{'}\rightarrow1^{'}\rightarrow2^{'}\rightarrow3^{'}\rightarrow8$ and $4^{'}\rightarrow1\rightarrow2\rightarrow3\rightarrow4\rightarrow5\rightarrow6\rightarrow7\rightarrow8$, that's to say, we have $D(12,5)=D(12,9)=[4^{'},1^{'},2^{'},3^{'},8; 4^{'},1,2,3,4,5,6,7,8]$ in this case.

Case 2.1.2$(a)$ When $7\rightarrow2^{'}$, then $4^{'}\rightarrow7$, $2^{'}\rightarrow3$, $2^{'}\rightarrow5$, $3\rightarrow4^{'}$ and $5\rightarrow4^{'}$. We have $8\rightarrow1^{'}$ and $8\rightarrow1\rightarrow2\rightarrow3\rightarrow4\rightarrow5\rightarrow6\rightarrow7\rightharpoonup2^{'}\rightarrow3^{'}\rightarrow4^{'}\rightarrow1$; $3\rightarrow4\rightarrow1\rightarrow2$ and $3\rightarrow8\rightarrow5\rightarrow6\rightarrow7\rightarrow2^{'}\rightarrow3^{'}\rightarrow4^{'}\rightarrow1^{'}\rightarrow2$; $3\rightarrow4\rightarrow5\rightarrow6\rightarrow7\rightarrow2^{'}$ and $3\rightarrow8\rightarrow1\rightarrow2\rightarrow3^{'}\rightarrow4^{'}\rightarrow1^{'}\rightarrow2^{'}$; that's to say, we have $D(12,2)=D(12,12)=[8,1^{'}; 8,1,2,3,4,5,6,7,2^{'},3^{'},4^{'},1]$, $D(12,4)=D(12,10)=[3,4,1,2; 3,8,5,6,7,2^{'},3^{'},4^{'},1^{'},2]$ and $D(12,6)=D(12,8)=[3,4,5,6,7,2^{'}; 3,8,1,2,3^{'},4^{'},1^{'},2^{'}]$ in this case. If $4\rightarrow3^{'}$, then $1^{'}\rightarrow4$, $6\rightarrow1^{'}$ and $3^{'}\rightarrow6$. Now we have $8\rightarrow1^{'}\rightarrow4$ and $8\rightarrow5\rightarrow6\rightarrow7\rightarrow2^{'}\rightarrow3^{'}\rightarrow4^{'}\rightarrow1\rightarrow2\rightarrow3\rightarrow4$; $2^{'}\rightarrow5\rightarrow6\rightarrow7\rightarrow8\rightarrow1^{'}\rightarrow4$ and $2^{'}\rightarrow3^{'}\rightarrow4^{'}\rightarrow1\rightarrow2\rightarrow3\rightarrow4$; that's to say, now we have $D(12,3)=D(12,11)=[8,1^{'},4; 8,5,6,7,2^{'},3^{'},4^{'},1,2,3,4]$ and $D(12,7)=[2^{'},5,6,7,8,1^{'},4; 2^{'},3^{'},4^{'},1,2,3,4]$ in this case. If $3^{'}\rightarrow4$, then $4\rightarrow1^{'}\rightarrow6\rightarrow3^{'}$. We have $4\rightarrow1^{'}\rightarrow2^{'}$ and $4\rightarrow1\rightarrow2\rightarrow3\rightarrow8\rightarrow5\rightarrow6\rightarrow3^{'}\rightarrow4^{'}\rightarrow7\rightarrow2^{'}$; $3^{'}\rightarrow4\rightarrow1^{'}\rightarrow2^{'}\rightarrow5\rightarrow6\rightarrow7$ and $3^{'}\rightarrow8\rightarrow1\rightarrow2\rightarrow3\rightarrow4^{'}\rightarrow7$; that's to say, now we have $D(12,3)=D(12,11)=[4,1^{'},2^{'}; 4,1,2,3,8,5,6,3^{'},4^{'},7,2^{'}]$ and $D(12,7)=[3^{'},4,1^{'},2^{'},5,6,7; 3^{'},8,1,2,3,4^{'},7]$ in this case.

Case 2.1.2$(b)$ When $2^{'}\rightarrow7$, then $7\rightarrow4^{'}$.

$(i)$ When $4^{'}\rightarrow3$, then $3^{'}\rightarrow2^{'}\rightarrow5\rightarrow4^{'}$. We have $8\rightarrow1^{'}$ and $8\rightarrow1\rightarrow6\rightarrow7\rightarrow4\rightarrow5\rightarrow2\rightarrow3\rightarrow2^{'}\rightarrow3^{'}\rightarrow4^{'}\rightarrow1^{'}$; $1^{'}\rightarrow2^{'}\rightarrow3^{'}\rightarrow4^{'}$ and $1^{'}\rightarrow2\rightarrow3\rightarrow4\rightarrow1\rightarrow6\rightarrow7\rightarrow8\rightarrow5\rightarrow4^{'}$; $8\rightarrow1^{'}\rightarrow2^{'}\rightarrow3^{'}\rightarrow4^{'}\rightarrow3$ and $8\rightarrow5\rightarrow6\rightarrow7\rightarrow4\rightarrow1\rightarrow2\rightarrow3$; that's to say, we have $D(12,2)=D(12,12)=[8,1^{'}; 8,1,6,7,4,5,2,3,2^{'},3^{'},4^{'},1^{'}]$, $D(12,4)=D(12,10)=[1^{'},2^{'},3^{'},4^{'}; 1^{'},2,3,4,1,6,7,8,5,4^{'}]$ and $D(12,6)=D(12,8)=[8,1^{'},2^{'},3^{'},4^{'},3; 8,5,6,7,4,1,2,3]$ in this case. If $3^{'}\rightarrow4$, then $4\rightarrow1^{'}\rightarrow6\rightarrow3^{'}$. We have $4\rightarrow1^{'}\rightarrow2^{'}$ and $4\rightarrow5\rightarrow6\rightarrow7\rightarrow8\rightarrow1\rightarrow2\rightarrow3^{'}\rightarrow4^{'}\rightarrow3\rightarrow2^{'}$; $3^{'}\rightarrow4\rightarrow1^{'}\rightarrow2^{'}\rightarrow5\rightarrow6\rightarrow7$ and $3^{'}\rightarrow4^{'}\rightarrow3\rightarrow8\rightarrow1\rightarrow2\rightarrow7$; that's to say, now we have $D(12,3)=D(12,11)=[4,1^{'},2^{'}; 4,5,6,7,8,1,2,3^{'},4^{'},3,2^{'}]$ and $D(12,7)=[3^{'},4,1^{'},2^{'},5,6,7; 3^{'},4^{'},3,8,1,2,7]$ in this case. If $4\rightarrow3^{'}$, then $1^{'}\rightarrow4$, $6\rightarrow1^{'}$ and $3^{'}\rightarrow6$. Now we have $1^{'}\rightarrow4\rightarrow5$ and $1^{'}\rightarrow2\rightarrow3^{'}\rightarrow4^{'}\rightarrow1\rightarrow6\rightarrow3\rightarrow2^{'}\rightarrow7\rightarrow8\rightarrow5$; $2^{'}\rightarrow5\rightarrow6\rightarrow7\rightarrow8\rightarrow1^{'}\rightarrow4$ and $2^{'}\rightarrow3^{'}\rightarrow4^{'}\rightarrow1\rightarrow2\rightarrow3\rightarrow4$; that's to say, now we have $D(12,3)=D(12,11)=[1^{'},4,5; 1^{'},2,3^{'},4^{'},1,6,3,2^{'},7,8,5]$ and $D(12,7)=[2^{'},5,6,7,8,1^{'},4; 2^{'},3^{'},4^{'},1,2,3,4]$ in this case.

$(ii)$ When $3\rightarrow4^{'}$, then $4^{'}\rightarrow5\rightarrow2^{'}\rightarrow3$. If $4\rightarrow3^{'}$, then $1^{'}\rightarrow4$, $6\rightarrow1^{'}$ and $3^{'}\rightarrow6$. Now we have $1^{'}\rightarrow2^{'}$ and $1^{'}\rightarrow2\rightarrow7\rightarrow8\rightarrow5\rightarrow6\rightarrow3\rightarrow4\rightarrow3^{'}\rightarrow4^{'}\rightarrow1\rightarrow2^{'}$; $4^{'}\rightarrow1^{'}\rightarrow2$ and $4^{'}\rightarrow5\rightarrow2^{'}\rightarrow3\rightarrow4\rightarrow3^{'}\rightarrow6\rightarrow7\rightarrow8\rightarrow1\rightarrow2$; $1^{'}\rightarrow2\rightarrow3^{'}\rightarrow4^{'}$ and $1^{'}\rightarrow4\rightarrow5\rightarrow6\rightarrow7\rightarrow8\rightarrow1\rightarrow2^{'}\rightarrow3\rightarrow4^{'}$; $4^{'}\rightarrow1^{'}\rightarrow2^{'}\rightarrow3^{'}\rightarrow8$ and $4^{'}\rightarrow1\rightarrow2\rightarrow3\rightarrow4\rightarrow5\rightarrow6\rightarrow7\rightarrow8$; $4^{'}\rightarrow1^{'}\rightarrow2\rightarrow7\rightarrow8\rightarrow1$ and $4^{'}\rightarrow5\rightarrow2^{'}\rightarrow3^{'}\rightarrow6\rightarrow3\rightarrow4\rightarrow1$; $4^{'}\rightarrow5\rightarrow2^{'}\rightarrow3\rightarrow8\rightarrow1\rightarrow6$ and $4^{'}\rightarrow1^{'}\rightarrow2\rightarrow7\rightarrow4\rightarrow3^{'}\rightarrow6$; that's to say, now we have $D(12,2)=D(12,12)=[1^{'},2^{'}; 1^{'},2,7,8,5,6,3,4,3^{'},4^{'},1,2^{'}]$, $D(12,3)=D(12,11)=[4^{'},1^{'},2; 4^{'},5,2^{'},3,4,3^{'},6,7,8,1,2]$, $D(12,4)=D(12,10)=[1^{'},2,3^{'},4^{'}; 1^{'},4,5,6,7,8,1,2^{'},3,4^{'}]$, $D(12,5)=D(12,9)=[4^{'},1^{'},2^{'},3^{'},8; 4^{'},1,2,3,4,5,6,7,8]$ in this case, we also have $D(12,6)=D(12,8)=[4^{'},1^{'},2,7,8,1; 4^{'},5,2^{'},3^{'},6,3,4,1]$ and $D(12,7)=[4^{'},5,2^{'},3,8,1,6; 4^{'},1^{'},2,7,4,3^{'},6]$ in this case. If $3^{'}\rightarrow4$, $BT_{12}$ has a Hamiltonian cycle $(1,2,3,4,5,6,7,8,1^{'},2^{'},3^{'},4^{'})$ and $BT_{12}$ is isomorphic to a digraph $D$ which has a Hamiltonian cycle $(1,2,3,...,12,1)$ and for any vertex $i$, there are $(4m+i-1,i)\in A(D)$ and $(i,4m+i+1)\in A(D)$, where $1\leq m\leq 2$, the vertices modulo $12$ so that the vertex $12+i$ is the vertex $i$.

Case 2.2 When $1^{'}\rightarrow8$, then $8\rightarrow3^{'}$. We have $1^{'}\rightarrow8$ and $1^{'}\rightarrow2^{'}\rightarrow3^{'}\rightarrow4^{'}\rightarrow1\rightarrow2\rightarrow3\rightarrow4\rightarrow5\rightarrow6\rightarrow7\rightarrow8$; $4^{'}\rightarrow1^{'}\rightarrow2^{'}\rightarrow3^{'}$ and $4^{'}\rightarrow1\rightarrow2\rightarrow3\rightarrow4\rightarrow5\rightarrow6\rightarrow7\rightarrow8\rightarrow3^{'}$; $1\rightarrow2^{'}\rightarrow3^{'}\rightarrow4^{'}\rightarrow1^{'}\rightarrow8$ and $1\rightarrow2\rightarrow3\rightarrow4\rightarrow5\rightarrow6\rightarrow7\rightarrow8$; that's to say, we have $D(12,2)=D(12,12)=[1^{'},8; 1^{'},2^{'},3^{'},4^{'},1,2,3,4,5,6,7,8]$, $D(12,4)=D(12,10)=[4^{'},1^{'},2^{'},3^{'}; 4^{'},1,2,3,4,5,6,7,8,3^{'}]$ and $D(12,6)=D(12,8)=[1,2^{'},3^{'},4^{'},1^{'},8; 1,2,3,4,5,6,7,8]$ in this case. Now we consider two cases for others.

Case 2.2.1 When $3^{'}\rightarrow2$, then $2\rightarrow1^{'}$. We have $1^{'}\rightarrow6\rightarrow7\rightarrow8\rightarrow5$ and $1^{'}\rightarrow2^{'}\rightarrow3^{'}\rightarrow4^{'}\rightarrow1\rightarrow2\rightarrow3\rightarrow4\rightarrow5$; $4^{'}\rightarrow1^{'}\rightarrow8$ and $4^{'}\rightarrow1\rightarrow2^{'}\rightarrow3^{'}\rightarrow2\rightarrow3\rightarrow4\rightarrow5\rightarrow6\rightarrow7\rightarrow8$; that's to say, we have $D(12,5)=D(12,9)=[1^{'},6,7,8,5; 1^{'},2^{'},3^{'},4^{'},1,2,3,4,5]$ and $D(12,3)=D(12,11)=[4^{'},1^{'},8; 4^{'},1,2^{'},3^{'},2,3,4,5,6,7,8]$ in this case. Now we consider two cases as follows.

Case 2.2.1$(a)$ When $1^{'}\rightarrow6$, then $6\rightarrow3^{'}\rightarrow4\rightarrow1^{'}$. If $3\rightarrow4^{'}$, then $2^{'}\rightarrow3$. We have $2^{'}\rightarrow3^{'}\rightarrow4^{'}\rightarrow1\rightarrow2\rightarrow1^{'}\rightarrow8$ and $2^{'}\rightarrow3\rightarrow4\rightarrow5\rightarrow6\rightarrow7\rightarrow8$, that's to say, we have $D(12,7)=[2^{'},3^{'},4^{'},1,2,1^{'},8; 2^{'},3,4,5,6,7,8]$ in this case. If $4^{'}\rightarrow3$, then $3\rightarrow2^{'}$, $2^{'}\rightarrow5\rightarrow4^{'}$ and $2^{'}\rightarrow7\rightarrow4^{'}$.
We have $3\rightarrow2^{'}\rightarrow3^{'}\rightarrow4^{'}\rightarrow1\rightarrow2\rightarrow1^{'}$ and $3\rightarrow8\rightarrow5\rightarrow6\rightarrow7\rightarrow4\rightarrow1^{'}$; that's to say, we have $D(12,7)=[3,2^{'},3^{'},4^{'},1,2,1^{'}; 3,8,5,6,7,4,1^{'}]$ in this case.

Case 2.2.1$(b)$ When $6\rightarrow1^{'}$, then $1^{'}\rightarrow4\rightarrow3^{'}\rightarrow6$. If $3\rightarrow4^{'}$, then $2^{'}\rightarrow3$. We have $2^{'}\rightarrow3^{'}\rightarrow4^{'}\rightarrow1\rightarrow2\rightarrow1^{'}\rightarrow8$ and $2^{'}\rightarrow3\rightarrow4\rightarrow5\rightarrow6\rightarrow7\rightarrow8$, that's to say, we have $D(12,7)=[2^{'},3^{'},4^{'},1,2,1^{'},8; 2^{'},3,4,5,6,7,8]$ in this case. If $4^{'}\rightarrow3$, then $3\rightarrow2^{'}$, $2^{'}\rightarrow5\rightarrow4^{'}$ and $2^{'}\rightarrow7\rightarrow4^{'}$.
We have $1^{'}\rightarrow2^{'}\rightarrow7\rightarrow8\rightarrow5\rightarrow6\rightarrow3$ and $1^{'}\rightarrow4\rightarrow3^{'}\rightarrow4^{'}\rightarrow1\rightarrow2\rightarrow3$; that's to say, we have $D(12,7)=[1^{'},4,3^{'},4^{'},1,2,3; 1^{'},2^{'},7,8,5,6,3]$ in this case.

Case 2.2.2 When $2\rightarrow3^{'}$, then $1^{'}\rightarrow2$, $3^{'}\rightarrow4\rightarrow1^{'}$ and $3^{'}\rightarrow6\rightarrow1^{'}$. We have  $4^{'}\rightarrow1^{'}\rightarrow8$ and $4^{'}\rightarrow1\rightarrow2^{'}\rightarrow3^{'}\rightarrow6\rightarrow3\rightarrow4\rightarrow5\rightarrow2\rightarrow7\rightarrow8$; $6\rightarrow3\rightarrow4\rightarrow5\rightarrow2$ and $6\rightarrow7\rightarrow8\rightarrow1\rightarrow2^{'}\rightarrow3^{'}\rightarrow4^{'}\rightarrow1^{'}\rightarrow2$ in this case. Now we consider two cases as follows.

Case 2.2.2$(a)$ When $4^{'}\rightarrow7$, then $7\rightarrow2^{'}$. We have $1^{'}\rightarrow2\rightarrow3\rightarrow4\rightarrow5\rightarrow6\rightarrow7$ and $1^{'}\rightarrow8\rightarrow1\rightarrow2^{'}\rightarrow3^{'}\rightarrow4^{'}\rightarrow7$; that's to say, we have $D(12,7)=[1^{'},2,3,4,5,6,7; 1^{'},8,1,2^{'},3^{'},4^{'},7]$ in this case.

Case 2.2.2$(b)$ When $7\rightarrow4^{'}$, then $2^{'}\rightarrow7$. If $2^{'}\rightarrow5$, we have $7\rightarrow4^{'}\rightarrow1\rightarrow2\rightarrow3\rightarrow4\rightarrow5$ and $7\rightarrow8\rightarrow3^{'}\rightarrow6\rightarrow1^{'}\rightarrow2^{'}\rightarrow5$; that's to say, we have $D(12,7)=[7,4^{'},1,2,3,4,5; 7,8,3^{'},6,1^{'},2^{'},5]$ in this case. If $5\rightarrow2^{'}$, then $2^{'}\rightarrow3\rightarrow4^{'}\rightarrow5$. we have $4^{'}\rightarrow5\rightarrow2^{'}\rightarrow3^{'}\rightarrow6\rightarrow7\rightarrow8$ and $4^{'}\rightarrow1\rightarrow2\rightarrow3\rightarrow4\rightarrow1^{'}\rightarrow8$; that's to say, we have $D(12,7)=[4^{'},5,2^{'},3^{'},6,7,8; 4^{'},1,2,3,4,1^{'},8]$ in this case.
The proof is complete.
\end{proof}

\section{Main results}

\begin{thm} Let $BT_{4k}$ be a decomposable $k$-regular $(k\geq 3)$ bipartite tournament, then $BT_{4k}$ contains $D(4k,p)$ for all $2\leq p\leq 4k$ unless $BT_{4k}$ is isomorphic to a digraph $D$ having a Hamiltonian cycle $(1,2,3,...,4k,1)$ and  $(4m+i-1,i)\in A(D)$ and $(i,4m+i+1)\in A(D)$ for all $i$.
\end{thm}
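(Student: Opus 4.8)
The natural approach is induction on $k$, with the base cases $k=2$ (Lemma~\ref{1}) and $k=3$ (Lemma~\ref{2}) already in hand. So assume $k\geq 4$ and that the statement holds for $BT_{4(k-1)}$. By the definition of a decomposable $k$-regular bipartite tournament, $BT_{4k}$ is built from a copy $BT_4$ (a $4$-cycle $1'\to 2'\to 3'\to 4'\to 1'$) and a decomposable $(k-1)$-regular tournament $BT_{4(k-1)}$ by adding all the arcs between them. First I would split on whether $BT_{4(k-1)}$ is an exceptional digraph or not. If it is \emph{not} exceptional, then by the inductive hypothesis it contains $D(4(k-1),q)$ for every $2\leq q\leq 4(k-1)$; the plan is then to ``extend'' each such $D(4(k-1),q)$ by splicing the four vertices of $BT_4$ into it to produce $D(4k,p)$ for the full range $2\leq p\leq 4k$. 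Concretely, one takes one of the two directed paths of the $D(4(k-1),q)$, finds a vertex $u$ on it and a vertex $v$ on $BT_4$ with $u\to v$ (or $v\to u$), walks around the $4$-cycle of $BT_4$, and re-enters the old structure; the bipartite-tournament regularity forces enough arcs between $BT_4$ and $BT_{4(k-1)}$ that such splice points always exist. This is exactly the mechanism used in the proof of Lemma~\ref{2} (the long chains like $1\to 2\to 1'\to 2'\to 3'\to 4'\to 3\to 4\to\cdots$), and the bookkeeping is to check that for each target $p$ one can choose $q$ and the insertion so that the two blocks have the right lengths; because $D(n,p)\cong D(n,n-p+2)$ one only needs roughly half the values of $p$.

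If $BT_{4(k-1)}$ \emph{is} an exceptional digraph — i.e. it has Hamilton cycle $(1,2,\dots,4(k-1),1)$ with the extra ``chord'' arcs $(4m+i-1,i),(i,4m+i+1)$ — then one cannot invoke the inductive conclusion directly, and this is where I expect the real work to be. Here the plan is to use the rich, highly structured arc set of the exceptional $BT_{4(k-1)}$ together with the $4$-cycle $BT_4$ and the arcs joining them. One again picks a dominating arc from $BT_4$ into $BT_{4(k-1)}$ (its existence is forced by regularity: some vertex of $BT_4$ must dominate some vertex on the other side), say $4'\to 1$, hence $1\to 2'$; then one does a careful case analysis on the orientation of the remaining $BT_4$–$BT_{4(k-1)}$ arcs, mirroring Cases~2.1–2.2 of Lemma~\ref{2}. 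Using the abundance of short ``$+2$'' and ``$-1$'' chords in the exceptional structure, one shows that $BT_{4k}$ now contains $D(4k,p)$ for every $p$ — \emph{unless} every one of these new cross arcs is forced into the unique pattern that makes $BT_{4k}$ itself the exceptional digraph of order $4k$. The conclusion in that final subcase is precisely the statement that $(1,2,\dots,4k,1)$ is a Hamilton cycle with $(4m+i-1,i),(i,4m+i+1)\in A(D)$, so the induction closes.

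The key intermediate step I would isolate and prove as a lemma is a \emph{path-extension lemma}: given a bipartite tournament $H$ on $4(k-1)$ vertices and a $4$-cycle $C=BT_4$ joined to $H$ so that the whole is a $k$-regular bipartite tournament, if $H$ contains both families $P(4(k-1),q)$ and $Q(4(k-1),q)$ (the one-arc-deleted versions of $D$), then the union contains $D(4k,p)$ for the appropriate values of $p$. This is the bipartite analogue of how Alspach–Rosenfeld's $P(n,p),Q(n,p)$ result was used by Wojda for tournaments, as the excerpt recalls. With such a lemma the non-exceptional case is immediate, and only the exceptional case needs the hand-tailored arc-orientation casework.

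\textbf{Main obstacle.} The hard part will be the exceptional case: there one has no inductive leverage and must argue directly, and one must also prove the \emph{necessity} half — that if $BT_{4k}$ fails to contain some $D(4k,p)$ then it must be exactly the exceptional digraph — which amounts to showing that every deviation from the exceptional arc pattern between $BT_4$ and $BT_{4(k-1)}$ (and within the lower block) creates one of the missing two-block cycles. Keeping the case analysis finite and uniform in $k$ (rather than exploding with $k$) is the delicate point; the saving grace is that only the four new vertices and their $8k-8$ incident cross arcs are ``free,'' so the number of genuinely distinct configurations does not grow with $k$, exactly as in the passage from $BT_8$ to $BT_{12}$.
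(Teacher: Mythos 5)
Your non-exceptional case matches the paper: there the paper takes $D(4k-4,p)=[a_1,\dots,a_p;\,a_1,b_1,\dots,b_{4k-4-p},a_p]$ for the same value of $p$ (restricted to $2\leq p\leq 2k+1$ via $D(4k,p)\cong D(4k,4k-p+2)$, so the long block has at least five vertices), looks at the four consecutive vertices $a_1,b_1,b_2,b_3$ on the long block, and by a short orientation analysis splices the $4$-cycle $1'2'3'4'$ into that block, producing $D(4k,p)$. No choice of a different $q$, and no $P(n,p)/Q(n,p)$-type lemma, is needed; your ``path-extension lemma'' is harmless but the paper's direct insertion is simpler.

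The genuine gap is in the exceptional case, and it is exactly the obstacle you flag but do not resolve. You propose a direct, hand-tailored analysis of the cross arcs mirroring Cases~2.1--2.2 of Lemma~\ref{2}, asserting that the number of configurations does not grow with $k$; but for general $k$ you must exhibit two-block Hamiltonian cycles $D(4k,p)$ for every $p$, and these cycles thread through all $4k$ vertices, so there is no a priori reason the casework stays finite or uniform in $k$ --- you give no mechanism for this. The paper avoids the problem entirely by recovering inductive leverage: when $BT_{4(k-1)}$ is exceptional, it uses the regularity of $BT_{4k}$ and the rigid chord structure to find four vertices $s,s+2,t,t+2$ of $BT_{4(k-1)}$ that themselves induce a $4$-cycle and interact with $1',2',3',4'$ so that deleting them leaves a decomposable $(k-1)$-regular bipartite tournament $BT_{4(k-1)}^{*}$ that is \emph{not} isomorphic to the exceptional digraph (if the first choice fails, a second explicit choice $t-1,s+1,t+1,s+3$ works). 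Then the induction hypothesis applies to $BT_{4(k-1)}^{*}$, and the Case~1 insertion argument finishes, with the only surviving obstruction being that $BT_{4k}$ itself is the exceptional digraph of order $4k$. Without this re-decomposition idea (or some substitute giving a $k$-independent argument in the exceptional case), your plan does not close the induction.
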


\begin{proof} We know that $BT_{4k}$ is consisted of a decomposable $1$-regular bipartite tournament $BT_{4}$ and a decomposable $(k-1)$-regular bipartite tournament $BT_{4(k-1)}$. Let $BT_{4}$ be a cycle: $1^{'}\rightarrow2^{'}\rightarrow3^{'}\rightarrow4^{'}\rightarrow1^{'}$. We prove the theorem by induction on $k$. By Lemma \ref{2}, the result is true when $k=3$. Suppose that it holds for decomposable $(k-1)$-regular bipartite tournaments with $k\geq 4$. So $BT_{4(k-1)}$ contains $D(4k-4,p)$ for $2\leq p\leq 4k-4$ unless $BT_{4(k-1)}$ is isomorphic to a digraph $D$ which has a Hamiltonian cycle $(1,2,3,...,4k-4,1)$ and for any vertex $i$, there are $(4m+i-1,i)\in A(D)$ and $(i,4m+i+1)\in A(D)$, where $1\leq m\leq k-2$, the vertices modulo $4k-4$ so that the vertex $4k-4+i$ is the vertex $i$. We shall show that $BT_{4k}$ contains $D(4k,p)$ for $2\leq p\leq 4k$ unless $BT_{4k}$ is isomorphic to a digraph $D$ which has a Hamiltonian cycle $(1,2,3,...,4k,1)$ and for any vertex $i$, there are $(4m+i-1,i)\in A(D)$ and $(i,4m+i+1)\in A(D)$, where $1\leq m\leq k-1$, the vertices modulo $4k$ so that the vertex $4k+i$ is the vertex $i$. Furthermore, as $D(4k,p)$ and $D(4k,4k-p+2)$ are isomorphic, discussion on $D(4k,p)$ can be limited to $2\leq p\leq 2k+1$. Now we consider two cases as follows.

Case 1. $BT_{4(k-1)}$ contains $D(4k-4,p)$ for $2\leq p\leq 4k-4$.

We have $D(4k-4,p)=[a_{1},a_{2},...,a_{p}; a_{1},b_{1},...,b_{4k-4-p},a_{p}]$ $(2\leq p\leq4k-4)$. Let $P$ be a path such that $P=(a_{1},b_{1},...,b_{4k-4-p},a_{p})$. $P$ has at least five vertices when $2\leq p\leq 2k+1$. Selecting four consecutive adjacent points $a_{1}$, $b_{1}$, $b_{2}$, $b_{3}$. The vertex $a_{1}$ must dominates one of the vertices in $BT_{4}$, without loss of generality, we assume that $a_{1}\rightarrow1^{'}$, then $3^{'}\rightarrow a_{1}$ due to $BT_{4k}$ is $k$-regular. When $4^{'}\rightarrow b_{1}$, we have $a_{1}\rightarrow1^{'}\rightarrow2^{'}\rightarrow3^{'}\rightarrow4^{'}\rightarrow b_{1}\rightarrow b_{2}\rightarrow b_{3}\rightarrow...$, then $BT_{4}$ can be added into the path $P$ in this case. When $b_{1}\rightarrow4^{'}$, then $2^{'}\rightarrow b_{1}$. If $3^{'}\rightarrow b_{2}$, we have $a_{1}\rightarrow b_{1}\rightarrow4^{'}\rightarrow1^{'}\rightarrow2^{'}\rightarrow3^{'}\rightarrow b_{2}\rightarrow b_{3}\rightarrow...$, then $BT_{4}$ can be added into the path $P$ in this case; if $b_{2}\rightarrow3^{'}$, we consider two cases as follows.

$(a)$ When $2^{'}\rightarrow b_{3}$. We have $a_{1}\rightarrow b_{1}\rightarrow b_{2}\rightarrow3^{'}\rightarrow4^{'}\rightarrow1^{'}\rightarrow2^{'}\rightarrow b_{3}\rightarrow...$. Then $BT_{4}$ can be added into the path $P$ in this case.

$(b)$ When $b_{3}\rightarrow2^{'}$, then $4^{'}\rightarrow b_{3}$. As $2^{'}\rightarrow b_{1}$, we have $a_{1}\rightarrow 1^{'}\rightarrow 2^{'}\rightarrow b_{1}\rightarrow b_{2}\rightarrow 3^{'}\rightarrow 4^{'}\rightarrow b_{3}\rightarrow...$. Then $BT_{4}$ can be added into the path $P$ in this case.

Then $BT_{4k}$ contains $D(4k,p)$ for $2\leq p\leq 2k+1$ i.e. $BT_{4k}$ contains $D(4k,p)$ for $2\leq p\leq 4k$.

Case 2. $BT_{4(k-1)}$ is isomorphic to a digraph $D$ which has a Hamiltonian cycle $(1,2,3,...,4k-4,1)$ and for any vertex $i$, there are $(4m+i-1,i)\in A(D)$ and $(i,4m+i+1)\in A(D)$, where $1\leq m\leq k-2$, the vertices modulo $4k-4$ so that the vertex $4k-4+i$ is the vertex $i$.

$BT_{4k}$ is composed of $BT_{4}$ and $BT_{4(k-1)}$. If $BT_{4k}$ is isomorphic to a digraph $D$ which has a Hamiltonian cycle $(1,2,3,...,4k,1)$ and for any vertex $i$, there are $(4m+i-1,i)\in A(D)$ and $(i,4m+i+1)\in A(D)$, where $1\leq m\leq k-1$, the vertices modulo $4k$ so that the vertex $4k+i$ is the vertex $i$. The result has proved. If not, we shall show that $BT_{4k}$ contains $D(4k,p)$ for all $2\leq p\leq 4k$. By the same way as in the case 1, $BT_{4}$ can be added into the Hamiltonian cycle of $BT_{4(k-1)}$. Since $BT_{4k}$ is $k$-regular and $BT_{4(k-1)}$ is $(k-1)$-regular, so there exist four vertices $s,s+2,t,t+2$ in the Hamiltonian cycle of $BT_{4(k-1)}$ such that either $s\rightarrow 1^{'}\rightarrow (s+2)\rightarrow 3^{'}\rightarrow s$ or $(s+2)\rightarrow 1^{'}\rightarrow s\rightarrow 3^{'}\rightarrow (s+2)$ and either $t\rightarrow 2^{'}\rightarrow (t+2)\rightarrow 4^{'}\rightarrow t$ or $(t+2) \rightarrow 2^{'}\rightarrow t\rightarrow 4^{'}\rightarrow (t+2)$. We also have a cycle $s\rightarrow t\rightarrow (s+2)\rightarrow (t+2)\rightarrow s$ or $s\rightarrow (t+2)\rightarrow (s+2)\rightarrow t \rightarrow s$ since $BT_{4(k-1)}$ is isomorphic to a digraph $D$. Deleting $s,s+2,t,t+2$ from $BT_{4k}$, the resulting digraph is a decomposable $(k-1)$-regular bipartite tournament denoted by $BT_{4(k-1)}^{'}$ which is not isomorphic to $BT_{4(k-1)}$ and the four vertices form a decomposable $1$-regular bipartite tournament denoted by $BT_{4}^{'}$. Without loss of generality, we assume $BT_{4}^{'}$ has $s\rightarrow t\rightarrow (s+2)\rightarrow (t+2)\rightarrow s$ and $t>s+2$, we have a Hamiltonian cycle $(s-1)\rightarrow (s+4)\rightarrow...\rightarrow (t-1)\rightarrow (s+1)\rightarrow (t+1)\rightarrow (s+3)\rightarrow (t+3)\rightarrow....\rightarrow (s-1)$ in $BT_{4(k-1)}^{'}$. If $BT_{4(k-1)}^{'}$ is isomorphic to $BT_{4(k-1)}$, $BT_{4}$ must be added into two adjacent vertices of the Hamiltonian cycle of $BT_{4(k-1)}$. Without loss of generality, we assume $(t-1)\rightarrow 2^{'}$, then $2^{'}\rightarrow (t+1)\rightarrow 4^{'}\rightarrow (t-1)$ and $(s+3)\rightarrow 1^{'}\rightarrow (s+1)\rightarrow 3^{'}\rightarrow (s+3)$. We also have $(t-1)\rightarrow (s+1)\rightarrow (t+1)\rightarrow (s+3)\rightarrow (t-1)$. Now we select the four vertices $t-1,s+1,t+1,s+3$ instead of $s,s+2,t,t+2$. We get a decomposable $(k-1)$-regular bipartite tournament $BT_{4(k-1)}^{''}$ which is not isomorphic to $BT_{4(k-1)}$. So there must exist four vertices which form a decomposable $1$-regular bipartite tournament in $BT_{4(k-1)}$ such that, deleting them from $BT_{4k}$, the resulting digraph is a decomposable $(k-1)$-regular bipartite tournament denoted by $BT_{4(k-1)}^{*}$ which is not isomorphic to $BT_{4(k-1)}$. According to the assumptions, $BT_{4(k-1)}^{*}$ contains $D(4k-4,p)$ for $2\leq p\leq 4k-4$. By the similar way as in case 1, we have $BT_{4k}$ contains $D(4k,p)$ for all $2\leq p\leq 4k$.
The proof is complete.
\end{proof}

\section{Acknowledgements}

The research is supported by NSFC (No.11301371,  61502330), SRF for ROCS, SEM and Natural Sciences
Foundation of Shanxi Province (No. 2014021010-2), Fund Program for the Scientific Activities of Selected
Returned Overseas Professionals in Shanxi Province.

\end{document}